\definecolor{uibkorange}{rgb}{1,0.5,0}
\definecolor{uibkblue}{rgb}{0,0.18,0.44}
\setlist{noitemsep,leftmargin=*}
\renewcommand*{\vec}[1]{\mathbf{#1}}
\newcommand*{\RSet}{\mathbb{R}}
\newcommand*{\ZSet}{\mathbb{Z}}
\newcommand*{\Norm}[1]{\Vert #1 \Vert}
\newcommand*{\defeq}{\mathrel{\mathop:}=}
\newcommand*{\SQ}{\mathcal{S}}
\newcommand*{\SP}[2]{\langle #1, #2 \rangle}
\newcommand*{\dunit}{\varepsilon}
\newcommand*{\sv}{\mathfrak{S}}
\newcommand*{\vv}{\mathfrak{V}}
\newcommand*{\primal}[1]{\widetilde{#1}}
\newcommand*{\dual}[1]{\widehat{#1}}
\newcommand*{\quaternion}[1]{\mathrm{#1}}
\newcommand*{\qi}{\quaternion{i}}
\newcommand*{\qj}{\quaternion{j}}
\newcommand*{\qk}{\quaternion{k}}
\newcommand*{\quadrilateral}[4]{[#1,#2,#3,#4]}
\newtheorem{theorem}{Theorem}
\newtheorem{proposition}[theorem]{Proposition}
\newtheorem{corollary}[theorem]{Corollary}
\theoremstyle{definition}
\newtheorem{definition}[theorem]{Definition}
\theoremstyle{remark}
\title{Discrete Gliding Along Principal Curves}
\author{Hans-Peter Schröcker\thanks{Hans-Peter Schröcker, Unit Geometry and CAD, University
  Innsbruck, Technikerstraße~13, A6020~Innsbruck, Austria,
  \url{http://geometrie.uibk.ac.at/schroecker/}}}
\date{\today}
\begin{document}

\maketitle%
\begin{onecolabstract}
We consider $n$-dimensional discrete motions such that any two
neighbouring positions correspond in a pure rotation (``rotating
motions''). In the Study quadric model of Euclidean displacements
these motions correspond to quadrilateral nets with edges contained in
the Study quadric (``rotation nets''). The main focus of our
investigation lies on the relation between rotation nets and discrete
principal contact element nets. We show that every principal contact
element net occurs in infinitely many ways as trajectory of a discrete
rotating motion (a discrete gliding motion on the underlying
surface). Moreover, we construct discrete rotating motions with two
non-parallel principal contact element net trajectories. Rotation nets
with this property can be consistently extended to higher dimensions.



%
{\small
\medskip\par\noindent
\emph{Keywords:} Discrete differential geometry,
          kinematics,
          rotational motion,
          rotation net,
          curvature line discretization,
          principal contact element net,
          gliding motion.
\par\noindent
\emph{MSC 2010:}
53A17, 
53A05, 
52C35  
}

\end{onecolabstract}

\begin{multicols}{2}
\section{Introduction}
\label{sec:introduction}

Discrete differential geometry is an active field of geometrical
research. Its aim is the development of discrete notions for
well-known concepts from differential geometry. The resulting theories
are often more elementary and concrete when compared to their smooth
counterparts. While classic differential geometry is largely based on
analysis, elementary geometric incidence or closure theorems are at
the core of discrete differential geometry. It is a discipline that
naturally lends itself to applications that require numeric
simulation, visualization, or the building of real world objects. An
excellent introduction to the current state of research is the
monograph \cite{bobenko08:_discrete_differential_geometry}.

In this article we relate recent progress in the theory of discrete
curvature line parametrizations to spatial kinematics. We study
discrete nets of proper (orientation-preserving) Euclidean
displacements such that neighbouring positions correspond in a
relative rotation (``discrete rotating motions''). Smooth motions with
this property naturally arise as gliding motions along principally
parametrized surfaces. Their discrete counterparts, rotating motions
with discrete curvature line trajectories, are the main topic of this
article.

In Section~\ref{sec:preliminaries} we recall the notion of principal
contact element nets\,---\,families of contact elements (point plus
oriented tangent plane), indexed by $\ZSet^n$, such that neighbouring
contact elements have a common tangent sphere. Principal contact
element nets have been introduced in
\cite{bobenko07:_organizing_principles} as a comprehensive concept
that captures different notions of discrete principal parametrizations
(circular nets and conical nets, see
\cite[Section~3.1]{bobenko08:_discrete_differential_geometry} and
\cite{pottmann08:_focal_circular_conical}). Indeed, the points of a
principal contact element net form a circular net (the elementary
quadrilaterals are circular; Figure~\ref{fig:pce-quadrilateral},
left), while its planes form a conical net (four planes meeting in a
vertex are tangent to a cone of revolution;
Figure~\ref{fig:pce-quadrilateral}, center).

A concise formulation of all calculations and formulas in this article
is possible within the dual quaternion calculus of spatial kinematics
(Section~\ref{sec:kinematics-dual-quaternions}). In
Section~\ref{sec:rotatation-quadrilaterals} we recall fundamental
results on ``rotation quadrilaterals''
\cite{schroecker10:_four_positions_theory}, the elementary building
blocks of rotation nets.

\begin{figure*}
  \begin{minipage}{0.33\linewidth}
    \centering
    \begin{overpic}{pce-circle}
      \put(3,10){\textcolor{uibkblue}{$(p_{\vec{i}}, \nu_{\vec{i}})$}}
      \put(72,12){\textcolor{uibkblue}{$\tau_1(p_{\vec{i}}, \nu_{\vec{i}})$}}
      \put(0,60){\textcolor{uibkblue}{\contour{white}{$\tau_2(p_{\vec{i}}, \nu_{\vec{i}})$}}}
      \put(50,53){\textcolor{uibkblue}{$\vec{z}^1_{\vec{i}}$}}
      \put(92,88){\textcolor{uibkblue}{$\vec{z}^2_{\vec{i}}$}}
    \end{overpic}
  \end{minipage}%
  \hfill
  \begin{minipage}{0.33\linewidth}
    \centering
    \includegraphics{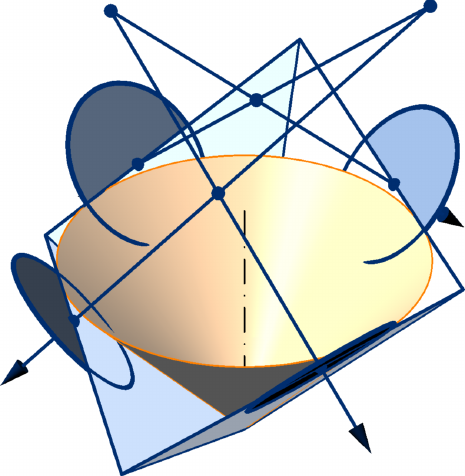}
  \end{minipage}%
  \hfill
  \begin{minipage}{0.33\linewidth}
    \centering
    \includegraphics{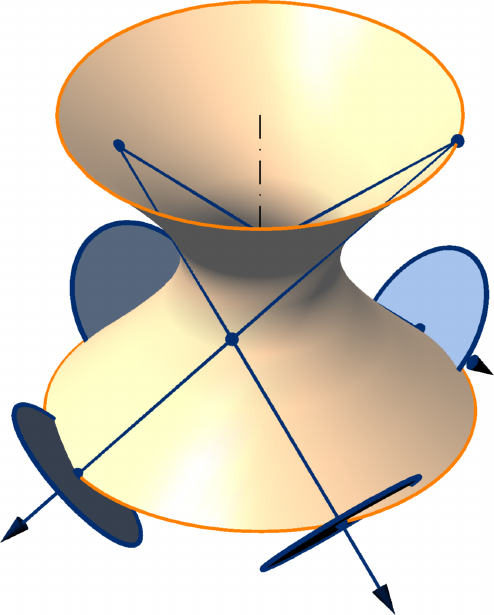}
  \end{minipage}
  \caption{Elementary quadrilateral in a principal contact
    element net: Points on a circle (left), planes tangent
    to a cone of revolution (middle), normal lines on a hyperboloid of
  revolution (right)}
  \label{fig:pce-quadrilateral}
\end{figure*}

The major contributions of this article are presented in
Section~\ref{sec:discrete-rotating-motions}. Just as a smooth surface
gives rise to many gliding motions, parametrized by principal lines
and the rotation angle about surface normals, a principal contact
element net occurs in many ways as trajectory of a discrete rotating
motion. The main result is a proof of the multidimensional consistency
of discrete rotating motions with two independent principal contact
element trajectories. The two trajectory surface are related by
discrete version of the classic Bäcklund transform for pseudospherical
surfaces.

\section{Preliminaries}
\label{sec:preliminaries}

\subsection{Curvature line discretizations}
\label{sec:curvature-line-discretizations}

A parametrization of a smooth surface in $\RSet^3$ is called a
curvature line parametrization or principal parametrization if
infinitesimally neighbouring surface normals along both families of
parameter lines intersect. Generically, this parametrization is unique
(up to re-parametrization of the individual parameter lines). The
property of concurrent neighbouring normals is preserved in its usual
discretizations. The most prominent example of discrete curvature
lines are circular nets\,---\,quadrilateral nets such that any
elementary quadrilateral has a circumcircle, see for example
\cite[Section~3.1]{bobenko08:_discrete_differential_geometry}. An
alternative discretization are conical nets\,---\,quadrilateral nets
such that the planes meeting in a vertex are tangent to a cone of
revolution \cite{pottmann08:_focal_circular_conical}. Neighboring
circle axis and neighbouring cone axis in circular and conical nets
intersect and can serve as discrete surface normals. In
\cite{bobenko07:_organizing_principles} principal contact element nets
were introduced as a generalization of both, circular and conical
nets.

\begin{definition}
  \label{def:contact-element-net}
  An \emph{oriented contact element} is a pair $(p,\nu)$ consisting of
  a point $p$ and an oriented plane $\nu$ incident with $p$. The
  oriented line $N$ orthogonal to $\nu$ and incident with $p$ is
  called the \emph{axis} or \emph{normal} of the contact element, $p$
  is its \emph{vertex} and $\nu$ its \emph{tangent plane}.
\end{definition}

Contact element nets are quadrilateral nets of oriented contact
elements, that is, they are maps from $\ZSet^n$ to the space of
oriented contact elements. The image of $\vec{i} \in \ZSet^n$ is
denoted by $(p_{\vec{i}}, \nu_{\vec{i}})$. In
Definition~\ref{def:principal-contact-element-net} below we adopt the
notation of \cite{bobenko08:_discrete_differential_geometry} where
$\tau_i$ indicates a shift of indices in the $i$-th coordinate
direction. For example $\tau_1p_{120} = p_{220}$, $\tau_2p_{120} =
p_{130}$, etc.

\begin{definition}
  \label{def:principal-contact-element-net}
  A \emph{contact element net} is a map $\vec{i} \mapsto (p_{\vec{i}},
  \nu_{\vec{i}})$ from $\ZSet^n$ to the space of oriented contact
  elements. A \emph{principal contact element net}
  (Figure~\ref{fig:pce-quadrilateral}) is a contact element net such
  that any two neighbouring contact elements
  $(p_{\vec{i}},\nu_{\vec{i}})$, $\tau_i(p_{\vec{i}},\nu_{\vec{i}})$
  have a common oriented tangent sphere.
\end{definition}

In other words, neighbouring normals $N_{\vec{i}}$ and
$\tau_iN_{\vec{i}}$ in a principal contact element net intersect in a
point $z^i_{\vec{i}} = N_{\vec{i}} \cap \tau_iN_{\vec{i}}$ which is at
the same oriented distance from $p_{\vec{i}}$ and~$\tau_ip_{\vec{i}}$.
The points $p_{\vec{i}}$ in a principal contact element net constitute
the vertices of a circular net, the oriented planes $\nu_{\vec{i}}$
are the face planes of a conical net. The contact element axes
$N_{\vec{i}}$ form a discrete line congruence
\cite[Section~2.2]{bobenko08:_discrete_differential_geometry} such
that any elementary quadrilateral lies on a hyperboloid of revolution
(Figure~\ref{fig:pce-quadrilateral}, right). Discrete line congruences
of this type have interesting properties but have not yet been
discussed in literature.

In this article we consider a kinematic generation of principal
contact element nets in $\RSet^3$. A concise analytic description can
be obtained by means of the dual quaternion calculus of spatial
kinematics which shall be introduced now.

\subsection{Kinematics and dual quaternions}
\label{sec:kinematics-dual-quaternions}

Quaternions and dual quaternions are important tools in theoretical
and applied kinematics, see for example the description in
\cite{husty10:_algebraic_geometry_kinematics},
\cite[Section~4.5]{mccarthy90:_introduction_theoretical_kinematics} or
\cite[Chapter~9]{selig05:_fundamentals_robotics}. In our study they
turn out to be a versatile tool as well. Equations in dual quaternion
form are concise, of low degree, accessible to geometric
interpretations and free of the need of case distinctions. We assume
that the reader is familiar with quaternion algebra (see for example
\cite{husty10:_algebraic_geometry_kinematics} or
\cite[Chapter~4]{mccarthy90:_introduction_theoretical_kinematics}) and
describe only the extension to dual quaternions and its application to
spatial kinematics.

A dual quaternion is an object of the form $a = \primal{a} +
\dunit\dual{a}$ where primal part $\primal{a}$ and scalar part
$\dual{a}$ are ordinary quaternions and $\dunit$ is the dual unit
satisfying $\dunit^2 = 0$. The addition of dual quaternions is
performed component-wise for primal and dual parts. The dual
quaternion multiplication $\star$ extends the multiplication of
ordinary quaternions. It is associative, distributive and the
quaternion units $1$, $\qi$, $\qj$, $\qk$ commute with $\dunit$.
These properties define the dual quaternion multiplication uniquely.

A dual quaternion $a = \primal{a} + \dunit\dual{a}$ can be identified
with a vector $a = (\primal{a}, \dual{a}) = (a_0,\ldots,a_7)$ in
$\RSet^8$. \emph{Vector part} and \emph{dual part} of $a$ are
\begin{equation}
  \label{eq:1}
  \begin{aligned}
    \vv a &= (0,a_1,a_2,a_3,0,a_5,a_6,a_7) = \vv{\primal{a}} + \dunit\vv{\dual{a}},\\
    \sv a &= (a_0,0,0,0,a_4,0,0,0) = \sv{\primal{a}} + \dunit\sv{\dual{a}}.
  \end{aligned}
\end{equation}
A normalized dual quaternion $a = \primal{a} + \dunit\dual{a}$
satisfies two conditions:
\begin{equation}
  \label{eq:2}
  \Norm{\primal{a}}^2 = a_0^2 + a_1^2 + a_2^2 + a_3^2 = 1
  \quad\text{and}\quad
  \SP{a}{a} = 0
\end{equation}
where
\begin{equation}
  \label{eq:3}
  \SP{a}{b} \defeq \sum_{i=0}^3 (a_ib_{i+4} + a_{i+4}b_i).
\end{equation}
The second equation in \eqref{eq:2} is the well-known Study
condition.

Plücker coordinate vectors $P = (p_0,\ldots,p_5)$ for straight lines
(see for example \cite[Chapter~2]{pottmann01:_line_geometry}) can be
embedded in the space of dual quaternions as
\begin{equation}
  \label{eq:4}
  P \hookrightarrow (0, p_0, p_1, p_2, 0, p_3, p_4, p_5).
\end{equation}
In this case the Study condition \eqref{eq:2} reduces to the Plücker
condition such that normalized Plücker coordinates (characterized by
$p_0^2 + p_1^2 + p_2^2 = 1$) become normalized dual quaternions of
vanishing scalar part. In this article we do not distinguish between a
straight line and its Plücker coordinates, embedded in the space of
dual quaternions.

Normalized dual quaternions with non-vanishing primal part constitute
a two-fold covering of the group of proper Euclidean displacements.
The action of a normalized dual quaternion $a$ on a point with
coordinate vector $(x_1,x_2,x_3)$ and a line $P$ (both in the moving
space) is given by
\begin{equation}
  \label{eq:5}
  \begin{aligned}
    1 + \dunit x' & = a_\dunit \star (1 + \dunit x) \star a^{-1}, \\
    P' & = a_\dunit \star P \star a_\dunit^{-1},
  \end{aligned}
\end{equation}
where $x = (0,x_1,x_2,x_3)$ is a vector valued ordinary quaternion,
$a_\dunit \defeq \primal{a} - \dunit\dual{a}$, and the prime indicates
coordinates in the fixed space.  The inverse displacement is described
by the inverse dual quaternion $a^{-1}$ defined by the equation $a
\star a^{-1} = 1$. It is uniquely defined for all dual quaternions
with non-vanishing primal part.

Since $a$ and $-a$ describe the same displacement, it is natural to
identify proportional dual-quaternions, thus arriving at Study's
kinematic mapping which associates proper Euclidean displacements with
points of the Study quadric
\begin{equation}
  \label{eq:6}
  \SQ\colon \SP{x}{x} = 0.
\end{equation}
The Study quadric is a hyperquadric in the seven-dimensional
projective space $P^7$ over $\RSet^8$. Only points of an exceptional
three-space $E$ with equation $\primal{x} = (0,0,0,0)$ do not occur as
images of proper Euclidean displacements.  We always identify
Euclidean displacements with homogeneous coordinate vectors that
describe points on the Study quadric (minus~$E$).

\subsection{Rotation quadrilaterals}
\label{sec:rotatation-quadrilaterals}

In Section~\ref{sec:discrete-rotating-motions} we will consider
special quadrilateral nets in the Study quadric. The geometry of an
elementary quadrilateral
\begin{equation}
  \label{eq:7}
  \quadrilateral{a_{\vec{i}}}{\tau_ia_{\vec{i}}}{\tau_ja_{\vec{i}}}{\tau_i\tau_ja_{\vec{i}}}
  = \quadrilateral{a_0}{a_1}{a_2}{a_3}
\end{equation}
of a net of this type is discussed in this section. It follows the
presentation of \cite{schroecker10:_four_positions_theory}.

\begin{definition}
  A quadrilateral $\quadrilateral{a_0}{a_1}{a_2}{a_3}$ on the Study
  quadric is called a \emph{rotation quadrilateral} if its vertices
  and edges are contained in~$\SQ$.
\end{definition}

The name ``rotation quadrilateral'' is justified by the observation
that the edge through $a_i$ and $a_{i+1}$ is contained in $\SQ$ if and
only if the relative displacement $r_{i,i+1} \defeq a_{i+1} \star
a_i^{-1}$ is a pure rotation (indices modulo four; see
\cite[Satz~19]{weiss35:_liniengeometrie_und_kinematik} or
\cite{husty10:_algebraic_geometry_kinematics}). (We will frequently
use results and formulas of
\cite{weiss35:_liniengeometrie_und_kinematik} but, occasionally, adapt
them to match our convention of quaternion multiplication which is
slightly different from that used
in~\cite{weiss35:_liniengeometrie_und_kinematik}.) The algebraic
characterization of relative rotations is
\begin{equation}
  \label{eq:8}
  \pi_5(r_{i,i+1}) = \pi_5(a_{i+1} \star a_i^{-1}) = 0
\end{equation}
where $\pi_5(x)$ is the projection onto the fifth coordinate (the dual
scalar part) of a dual quaternion
\cite[Satz~13]{weiss35:_liniengeometrie_und_kinematik}. We denote the
relative revolute axis of $r_{i,i+1}$ in the moving space
by~$R_{i,i+1}$.

The main result of \cite{schroecker10:_four_positions_theory}
characterizes contact elements whose homologous images form an
elementary quadrilateral in a principal contact element net.

\begin{proposition}[\cite{schroecker10:_four_positions_theory}]
  \label{prop:rotation-quadrilateral}
  The only contact elements in the moving space whose homologous
  images with respect to a generic rotation quadrilateral form a
  non-degenerate elementary quadrilateral of a principal contact
  element net are those whose axes are transversal to the four
  relative revolute axes $R_{01}$, $R_{12}$, $R_{23}$, and~$R_{30}$.
\end{proposition}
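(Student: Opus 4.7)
The plan is to reduce the principal contact element condition along the four edges of the quadrilateral to four independent transversality conditions, one per edge, by exploiting the fact that along each edge the two homologous contact elements are related by a pure rotation.

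\emph{Step 1 (Key lemma for a single pure rotation).} Because the property of sharing a common oriented tangent sphere is Euclidean invariant, it suffices to analyze one edge at a time. Let $r$ be a pure rotation with axis $R$ and let $(q,\mu)$ be a contact element with normal line $M$. I claim that $(q,\mu)$ and $r(q,\mu)$ admit a common oriented tangent sphere if and only if $M$ is transversal to $R$. For the ``if'' direction, if $X = M \cap R$, then $r$ fixes $X$, so $X$ lies on both $M$ and $r(M)$; the vector $X-q$ points along $M$ and is thus perpendicular to $\mu$, and the same holds after applying $r$; moreover $\Norm{X-q} = \Norm{r(X)-r(q)} = \Norm{X-r(q)}$, so the sphere centered at $X$ through $q$ and $r(q)$ is a common tangent sphere, with orientations compatible since $r$ is orientation-preserving. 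For the ``only if'' direction, if $M$ is skew to $R$, the orbit of $M$ under the one-parameter rotation subgroup about $R$ sweeps a one-sheeted hyperboloid of revolution; $M$ and $r(M)$ are then distinct generators from the same ruling family, hence mutually skew, so $M \cap r(M) = \emptyset$ and no common tangent sphere exists. The borderline case $M \parallel R$ is subsumed by the projective closure, where ``transversal'' is understood as meeting in a possibly improper point.

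\emph{Step 2 (Transfer to the moving space).} By Step 1, the principal contact element condition on the edge $(p_i,\nu_i),(p_{i+1},\nu_{i+1}) = r_{i,i+1}(p_i,\nu_i)$ in the fixed space is equivalent to $N_i = a_i(N)$ being transversal to the fixed-space rotation axis of $r_{i,i+1}$, which is the $a_i$-image of the moving-space axis $R_{i,i+1}$. Since $a_i$ is an isometry, this in turn is equivalent to $N$ being transversal to $R_{i,i+1}$ in the moving space. Running the argument over all four edges $i = 0,1,2,3$ (indices modulo $4$) produces precisely the four transversality conditions stated in the proposition, which are simultaneously necessary and sufficient.

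\emph{Step 3 (Genericity and non-degeneracy).} For a generic rotation quadrilateral the four moving-space axes $R_{01}, R_{12}, R_{23}, R_{30}$ are in general position; their common transversals form a zero-dimensional Schubert variety (two lines classically), and each admissible axis $N$ is completed to a contact element by a two-parameter choice of vertex and orientation. Non-degeneracy of the resulting principal contact element quadrilateral is an open condition, satisfied on a dense subset of this configuration space. The most delicate ingredient of the argument is the ``only if'' in Step~1, which rests on the classical fact that two distinct generators from the same ruling of a one-sheeted hyperboloid of revolution are mutually skew; everything else is a direct transfer through the isometries $a_i$ and the characterization of rotation quadrilaterals recalled in Section~\ref{sec:rotatation-quadrilaterals}.
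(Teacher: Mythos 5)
Your proposal cannot be checked against an in-paper argument: the paper states Proposition~\ref{prop:rotation-quadrilateral} as an imported result of \cite{schroecker10:_four_positions_theory} and gives no proof of it, so what follows is an assessment of your argument on its own merits. The overall strategy --- reduce to a single-edge lemma for one pure rotation, then transfer through the isometries $a_i$ --- is sound, and your ``if'' direction and Step~2 are correct.

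There is, however, a genuine gap in the ``only if'' direction of Step~1, which is the substantive half of the proposition. Your claim that $M$ skew to $R$ forces $M$ and $r(M)$ to be skew is false when the direction of $M$ is orthogonal to that of $R$: take $R$ the $z$-axis and $M$ the line through $(0,1,0)$ with direction $(1,0,0)$; then $M$ is skew to $R$, but $M$ and $r(M)$ both lie in the plane $z=0$ and intersect for every rotation angle other than $0$ and $\pi$ (the swept surface degenerates from a hyperboloid of revolution to a planar region, so the ``two generators of one ruling are skew'' argument does not apply). Since genericity in the proposition is imposed on the rotation quadrilateral and not on the contact element, this case cannot be waved away: the classification must cover contact elements whose axis is perpendicular to one of the $R_{i,i+1}$. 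The lemma's conclusion does survive, but only because of the \emph{oriented} part of the tangency condition, which your argument never invokes: writing $X = M \cap r(M)$, one computes that $X$ lies at opposite signed distances from $q$ and $r(q)$ along the respective oriented normals, so the sphere centred at $X$ is a common tangent sphere but not a common \emph{oriented} one. A uniform way to close the gap is to characterize a common oriented tangent sphere by the existence of a point at equal signed distance on both normals and to observe that a pure rotation preserves the signed distance to a point precisely when that point lies on its axis. Two smaller points: you should also exclude a common oriented tangent \emph{plane} (the radius-$\infty$ sphere), which would require $\mu = r(\mu)$ as oriented planes and hence $M$ parallel to $R$; and in Step~3 the vertex of a contact element with prescribed axis must lie on that axis, so it varies in a one-parameter family plus a choice of orientation, not a two-parameter one.
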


Further elementary quadrilaterals of principal contact element nets
have the normal $R_{i,i+1}$. Since their $a_i$ and $a_{i+1}$ images
are identical, these quadrilaterals are degenerate. For a generic
rotation quadrilateral there exist two (possibly complex or
coinciding) transversals $M$ and $N$ of the four relative revolute
axis. An important property of rotation quadrilaterals is stated in
the following completion theorem:

\begin{theorem}
  \label{th:completion}
  Consider two skew lines $M$, $N$ in the moving space and two
  displacements $a_0$, $a_2$ of a rotation quadrilateral. Generically,
  there exist two positions $a_1$, $a_3$ (possibly complex) such that
  $\quadrilateral{a_0}{a_1}{a_2}{a_3}$ is a rotation quadrilateral
  with relative revolute axes $R_{i,i+1}$ that intersect $M$ and~$N$.
\end{theorem}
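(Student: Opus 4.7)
I would reformulate the completion problem as an intersection on the Study quadric and evaluate the intersection number by an explicit bidegree computation.

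Let $V\subset\SQ$ denote the three-dimensional subvariety of pure rotations whose axis (read in moving space) lies in the two-parameter family $\mathcal T$ of common transversals of $M$ and $N$. Since inversion preserves the axis, $V^{-1}=V$, and the conditions $r_{01},r_{12}\in V$ on $a_1$---as well as the analogous conditions $r_{23},r_{30}\in V$ on $a_3$---both translate into
\[
  a_1,\,a_3\;\in\;(a_0\star V)\cap(a_2\star V).
\]
Both sought positions lie in the same intersection of two three-dimensional subvarieties of the six-dimensional Study quadric; the expected dimension is zero, and it suffices to verify that the intersection number is two.

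To make the intersection explicit I would parameterise $V$ by $(s,t,\varphi)\mapsto\cos(\varphi/2)+\sin(\varphi/2)\hat R(s,t)$, where $R(s,t)$ is the transversal through $M(s)\in M$ and $N(t)\in N$, and the Plücker--dual-quaternion embedding $\hat R(s,t)$ has components of bidegree at most $(1,1)$ in $(s,t)$. Set $D=a_0^{-1}\star a_2$ and write the first rotation as $\tilde r_{01}=\rho(s,t,\varphi)$. The identity $\tilde r_{01}\star\tilde r_{12}=D$ gives $\tilde r_{12}=\rho^{-1}\star D$, and requiring $\tilde r_{12}\in V$ splits into (a) $\tilde r_{12}$ is a rotation, i.e.\ $\pi_5(\tilde r_{12})=0$, and (b) its axis meets both $M$ and $N$. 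The key observation is that (a) reads $c\,\pi_5(D)-\sigma\,\pi_5(\hat R\star D)=0$ with $c=\cos(\varphi/2)$, $\sigma=\sin(\varphi/2)$, so it is linear in the homogeneous pair $(c:\sigma)$ and for each fixed axis uniquely determines the angle by the projective ratio
$(c:\sigma)=(\pi_5(\hat R(s,t)\star D):\pi_5(D))$.
Substituting this back into $\vv(\tilde r_{12})\propto c D-\sigma\hat R\star D$ produces Plücker coordinates of the axis of $\tilde r_{12}$ with entries of bidegree $(1,1)$ in $(s,t)$. The two conditions in (b), being linear in the Plücker coordinates of the axis with $M$ respectively $N$ playing the role of the fixed second line, thus become two polynomial equations of bidegree $(1,1)$ in $(s,t)\in M\times N$.

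These are two bidegree-$(1,1)$ curves on $P^1\times P^1$, whose Bezout intersection number is $1\cdot 1+1\cdot 1=2$. Each common root $(s,t)$ determines the angle $\varphi$ and hence a single displacement $a_1=a_0\star\tilde r_{01}$; together the two roots yield the two vertices, labelled $a_1$ and $a_3$ according to the orientation of the quadrilateral. The main obstacle is to verify that for generic data the Bezout bound is actually attained, i.e.\ that the two bidegree-$(1,1)$ equations are distinct and intersect transversely, and that no solutions are absorbed by the degenerate loci $M(s)=N(t)$ (vanishing direction of $R(s,t)$) or $\pi_5(\hat R(s,t)\star D)=0$ (where the elimination for $\varphi$ is singular, corresponding to half-turns). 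A careful case analysis of these low-dimensional loci---carried out in~\cite{schroecker10:_four_positions_theory}---confirms that they carry no excess intersection in the generic situation, leaving exactly the asserted pair of (possibly complex) solutions.
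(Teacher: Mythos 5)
Your argument is correct in outline but takes a genuinely different route from the paper. The paper's proof rests on the observation that \emph{every} constraint on the unknown position $x$ is linear in its homogeneous dual-quaternion coordinates: the two pure-rotation conditions $\pi_5(x\star a_i^{-1})=0$ and the four transversality conditions with $M$ and $N$ ($i\in\{0,2\}$) cut out a straight line in $P^7$, and the two solutions $a_1$, $a_3$ are just the intersection points of that line with the Study quadric. You instead parametrize the candidate rotation $\tilde r_{01}$ by its axis, i.e.\ by a point of $M\times N\cong P^1\times P^1$, plus an angle, eliminate the angle through the condition $\pi_5(\tilde r_{12})=0$ (correctly observed to be linear in $(\cos\frac{\varphi}{2}:\sin\frac{\varphi}{2})$), and count the two remaining bidegree-$(1,1)$ conditions by Bezout on $P^1\times P^1$; the count $1\cdot1+1\cdot1=2$ reproduces the paper's answer, and your identification of the moving-space axis of $\tau_i$-relative displacements with $\vv(a_i^{-1}\star a_{i+1})$ is consistent with the paper's equations \eqref{eq:9}--\eqref{eq:10}. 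The paper's formulation buys brevity and robustness: genericity reduces to the single transparent requirement that the line not lie in $\SQ$ nor meet the exceptional space, it directly explains why the completion can be ``computed linearly'' as remarked after Theorem~\ref{th:consistency}, and the same linear systems reappear as the lines $K$, $L$ in the consistency proof. Your formulation buys an explicit geometric parametrization of the two solutions by the transversals carrying $R_{01}$ and $R_{30}$, at the price of a heavier genericity burden: you must still exclude a common component of the two $(1,1)$-curves and excess intersection along the degenerate loci you list, and deferring that verification to \cite{schroecker10:_four_positions_theory} is optimistic, since that reference does not perform this particular elimination. Both arguments are sound for generic data.
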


\begin{proof}
  We consider a (yet undetermined) position $x$ such that the relative
  displacements $x \star a_0^{-1}$, $x \star a_2^{-1}$ are rotations
  whose axes $X_0$, $X_2$ intersects $M$ and $N$. It turns out that
  this problem amounts to solving a quadratic equation from which the
  Theorem's claim follows.

  The Plücker line coordinate vector of the relative rotation axis
  $X'_i$ in the fixed space is
  \begin{equation}
    \label{eq:9}
    X'_i = \vv(x_i)_\dunit
  \end{equation}
  (adapted from
  \cite[Satz~13]{weiss35:_liniengeometrie_und_kinematik}).  According
  to \eqref{eq:5}, the Plücker coordinate vector of the relative
  revolute axis $X_i$ in the moving space is
  \begin{equation}
    \label{eq:10}
      X_i = (a_i)^{-1}_\dunit \star X'_i \star (a_i)_\dunit.
  \end{equation}
  Hence, the sought position $x$ has to satisfy the six linear
  equations
  \begin{equation}
    \label{eq:11}
    \begin{gathered}
      \SP{(a_i)_\dunit \star \vv(x \star a_i^{-1})_\dunit \star
        (a_i)_\dunit^{-1}}{T}= 0,\\
      \pi_5(x \star a_i^{-1}) = 0
    \end{gathered}
  \end{equation}
  with $i \in \{0, 2\}$ and $T \in \{M, N\}$, and the quadratic
  equation $\SP{x}{x} = 0$. The solutions are the intersection points
  of a straight line with the Study quadric \eqref{eq:6}.
\end{proof}

We will actually need the result of Theorem~\ref{th:completion} in the
following form:

\begin{corollary}
  \label{cor:completion}
  It is possible to complete a rotation quadrilateral whose relative
  revolute axes intersect two skew lines $M$, $N$ in the moving space
  from three admissible input positions $a_0$, $a_1$, $a_2$. The input
  data is admissible if the two relative displacements $a_1 \star
  a_0^{-1}$ and $a_2 \star a_1^{-1}$ are rotations whose axes
  intersect $M$ and $N$. In this case, the missing position $a_3$ is
  unique and real (if the input positions are real).
\end{corollary}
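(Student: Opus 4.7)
The plan is to reduce Corollary~\ref{cor:completion} to Theorem~\ref{th:completion} by recognizing that the unknown $a_3$ satisfies exactly the same system of equations as the auxiliary position $x$ appearing in the proof of the theorem. The two edges supplied by $a_3$ are $r_{23} = a_3 \star a_2^{-1}$ and $r_{30} = a_0 \star a_3^{-1}$, both required to be rotations whose axes meet $M$ and $N$. Since $r_{30}^{-1} = a_3 \star a_0^{-1}$ shares the revolute axis of $r_{30}$, these requirements are literally the conditions \eqref{eq:11} placed on $x$ for $i \in \{0,2\}$, with $a_0$ and $a_2$ in their original roles.

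Once this is observed, I inherit from that proof the structural information that the admissible positions for $a_3$ form the intersection of a real line $\ell \subset P^7$ (cut out by the six real linear equations in \eqref{eq:11}) with the Study quadric $\SQ$, hence the vanishing locus of a real quadratic in one parameter. The admissibility hypothesis on the triple says precisely that $a_1 \star a_0^{-1}$ and $a_2 \star a_1^{-1}$ are rotations with axes meeting $M$ and $N$. Because $a_1 \star a_2^{-1} = (a_2 \star a_1^{-1})^{-1}$ has the same revolute axis as $a_2 \star a_1^{-1}$, the given position $a_1$ satisfies the conditions imposed on $x$, so $a_1 \in \ell \cap \SQ$ is one of the two solutions of the quadratic.

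The sought $a_3$ is then forced to be the second solution, and Vieta's formulas (applied to a real quadratic with the real root $a_1$) guarantee that this second root is real and uniquely determined. A direct check on the four edges confirms that the resulting $\quadrilateral{a_0}{a_1}{a_2}{a_3}$ is a rotation quadrilateral whose relative revolute axes all meet $M$ and~$N$.

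The only delicate point I anticipate is the degenerate case in which $\ell$ is tangent to $\SQ$ at $a_1$, which would force $a_3$ to coincide with $a_1$ and collapse the quadrilateral. This is a non-generic configuration already ruled out by the standing genericity assumption inherited from Theorem~\ref{th:completion}, so it presents no real obstacle to the argument.
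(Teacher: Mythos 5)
Your proposal is correct and matches the paper's (implicit) derivation: the paper states the corollary as an immediate consequence of Theorem~\ref{th:completion}, and the intended argument is exactly yours — the admissible position $a_1$ is one of the two intersection points of the real line cut out by the system \eqref{eq:11} with the Study quadric, so the second intersection point $a_3$ is real and, generically, unique. Your observation that $a_1 \star a_2^{-1}$ and $a_2 \star a_1^{-1}$ (likewise $a_0 \star a_3^{-1}$ and $a_3 \star a_0^{-1}$) share a revolute axis, so that $a_1$ genuinely satisfies the same equations as the unknown $x$, is precisely the point the paper leaves to the reader.
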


\section{Discrete rotating motions}
\label{sec:discrete-rotating-motions}

Now we are ready to introduce the central concept of this article:

\begin{definition}
  A \emph{rotation net} (or a \emph{discrete rotating motion}) is a
  quadrilateral net whose vertices and edges are contained in the
  Study quadric.
\end{definition}

The elementary quadrilaterals of rotation nets are rotation
quadrilaterals. Rotation nets of dimension two discretize
two-parameter motions $x(t_1,t_2)$ with parameter lines $t_i =
\text{const.}$ whose instantaneous screws have vanishing pitch, that
is, they are actually rotations. The geometric interpretation in terms
of the Study quadric is that not only the point $x(t_1,t_2)$ but also
the tangents to the parameter lines are contained in the Study
quadric. We call parametrized motions of this type \emph{rotating} as
well. The extension of this concept to more-dimensional motions is
obvious.

Important examples of two-dimensional rotating motions are obtained
from a principal parametrization $\vec{f}(t_1,t_2)$ of a surface $\Phi
\subset \RSet^3$. The \emph{Darboux frame} associated with this
parametrization is the trihedron with base $\vec{f}$ and legs
$\vec{u}_1$, $\vec{u}_2$, and $\vec{n}$ where
\begin{equation}
  \label{eq:12}
  \vec{u}_i = \frac{\partial \vec{f}/\partial t_i}{\Norm{\partial \vec{f}/\partial t_i}},\quad
  \vec{n} = \vec{u}_1 \times \vec{u}_2.
\end{equation}
The Darboux frame is orthonormal. As $t_1$ and $t_2$ vary in their
respective domains, it moves along the surface $\Phi$. The thus
defined motion $x(t_1,t_2)$ is rotating and we may call it the Darboux
motion of the principal parametrization. If we add the rotation angle
$w$ about the surface normal $\vec{n}(t_1,t_2)$ as a third parameter
the motion $x(t_1,t_2,w)$ is still rotating (see
\cite[Section~7.1.5]{pottmann01:_line_geometry}, in particular
Remark~7.1.16). It is called a \emph{gliding motion} along
$\Phi$. Similar results hold true for more-parameter motions. These
observations motivate our study of the relation between discrete
rotating motions and discrete principal curvature parametrizations.

\subsection{Principal contact element nets and rotating motions}
\label{principal_contact_element_nets_and_rotating_motions}

We are going to discuss possibilities for generating a principal
contact element net $(p_{\vec{i}}, \nu_{\vec{i}})$ as trajectory of a
discrete rotating motion. Any two neighbouring contact elements
$(p_{\vec{i}}, \nu_{\vec{i}})$ and $\tau_i(p_{\vec{i}},
\nu_{\vec{i}})$ have a plane of symmetry $\beta_{\vec{i}}^i$ and,
starting from one contact element, the complete net can be generated
by successive reflections in the planes $\beta_{\vec{i}}^i$. The
sequence of reflections between two contact elements is not unique but
never leads to contradictions.

Sometimes (as for example in \cite{huhnen-venedey07:_curvature_line})
it is useful to assign an orthonormal frame
$(X_{\vec{i}},Y_{\vec{i}},Z_{\vec{i}})$ to every element of a
principal contact element net such that the origin coincides with
$p_{\vec{i}}$, $Z_{\vec{i}}$ is the normal line and the $X_{\vec{i}}$-
and $Y_{\vec{i}}$-axes correspond in the reflections at the planes
$\beta_{\vec{i}}^i$.  The thus obtained discrete line congruences
$X_{\vec{i}}$ and $Y_{\vec{i}}$ can be regarded as discretizations of
one family of principal curvature lines. It is obvious that by a
simple change of orientation of certain $X_{\vec{i}}$- and
$Y_{\vec{i}}$-axes the reflection at $\beta_{\vec{i}}^i$ can be
replaced by a rotation about an axis perpendicular to $N_{\vec{i}}$
and $\tau_iN_{\vec{i}}$ and incident with $z^i_{\vec{i}} = N_{\vec{i}}
\cap \tau_iN_{\vec{i}} \in \beta_{\vec{i}}^i$. The thus obtained
rotation net can be considered as discretization of a Darboux motion.
If we perturb every frame by a rotation through a certain angle about
its $Z_{\vec{i}}$-axis, neighbouring frames still correspond in a
rotation about an axis through $z^i_{\vec{i}}$ and in
$\beta_{\vec{i}}^i$.  This rotation net can be considered as a
$n$-dimensional discrete gliding motion along a principal
parametrization.

These considerations show that \emph{every principal contact element
  net occurs in multiple ways as trajectory of a rotation
  net}\,---\,just as any principal parametrization gives rise to
infinitely many gliding motions. The rotation angle about the
$Z_{\vec{i}}$-axis gives one degree of freedom per vertex.


\subsection{Pairs of principal contact element nets}
\label{sec:pairs-principal-contact-element-nets}

By Proposition~\ref{prop:rotation-quadrilateral}, rotation nets with
principal contact element nets as trajectories are characterized by
the fact that all relative rotation axis in the moving space intersect
a fixed line $N$ whose images constitute the set of contact element
normals. Clearly, every point $p \in N$ and the plane $\nu$ through
$p$ and orthogonal to $N$ define a contact element $(p,\nu)$ whose
trajectory is a principal contact element as well. In other words,
principal contact element nets as trajectories of discrete rotating
motions come in one-parameter families of parallel nets.

Calling two principal contact element nets \emph{independent} if there
exists a pair of contact elements with the same index $\vec{i} \in
\ZSet^n$ but different normals, we aim at a kinematic generation of
independent principal contact element nets as trajectories of rotation
nets. In view of Proposition~\ref{prop:rotation-quadrilateral} we can
hope at most for two independent trajectories. The characteristic
property is that all relative rotation axes in the moving space
intersect two fixed lines $M$ and $N$. We will show that such rotation
nets exist for arbitrary dimension of the underlying motion. This
result is rather surprising since a naive counting of free parameters
suggest only existence of $2$-dimensional motions with this property.

\begin{figure*}
  \centering
  \begin{minipage}[b]{0.5\linewidth-0.5\columnsep}
    \rule{\linewidth}{0pt}
    \centering
    \includegraphics{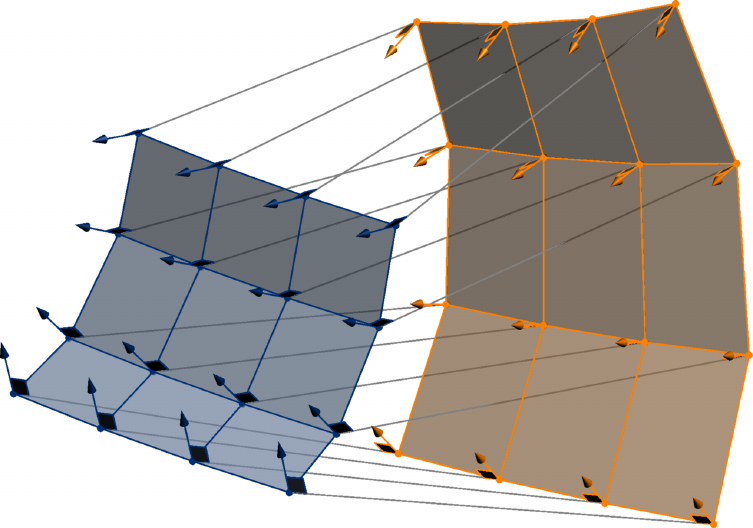}
    \caption{Two independent principal contact element net
      trajectories of a rotating motion}
    \label{fig:rotnet-04}
  \end{minipage}\hfill
  \begin{minipage}[b]{0.5\linewidth-0.5\columnsep}
    \rule{\linewidth}{0pt}
    \centering
    \begin{overpic}{rotnet-04b}
      \put(3,3){\textcolor{uibkblue}{$p_0$}}
      \put(24,-3){\textcolor{uibkblue}{$p_1$}}
      \put(32,18){\textcolor{uibkblue}{$p_2$}}
      \put(12,31){\textcolor{uibkblue}{$p_3$}}
      \put(76,47){\textcolor{uibkorange}{$q_0$}}
      \put(77,10){\textcolor{uibkorange}{$q_1$}}
      \put(95,5){\textcolor{uibkorange}{$q_2$}}
      \put(98,45){\textcolor{uibkorange}{$q_3$}}
    \end{overpic}
    \hspace*{\baselineskip}
    \caption{The figures formed by corresponding contact elements are
      congruent}
    \label{fig:rotnet-04b}
  \end{minipage}
\end{figure*}

\begin{theorem}
  \label{th:extend}
  A discrete $n$-dimensional rotating motion with two independent
  principal contact element nets as trajectories is uniquely defined
  by two skew axes $M$, $N$ of the contact elements in the moving
  space and the values along the coordinate axes in~$\ZSet^n$.
\end{theorem}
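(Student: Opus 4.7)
The plan is to reduce the theorem to the multidimensional consistency principle of discrete differential geometry. Uniqueness is immediate from Corollary~\ref{cor:completion}: in every elementary quadrilateral of the sought net, three consecutive vertices determine the fourth, since the relative revolute axes must meet both $M$ and $N$. Starting from the prescribed coordinate-axis data and propagating face by face, one obtains a candidate value $a_{\vec{i}}$ at every $\vec{i} \in \ZSet^n$; the content of the theorem is that this candidate is independent of the order of completions.

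By the standard DDG argument, path independence on $\ZSet^n$ reduces to the closing condition on an elementary $3$-cube indexed by $\{0,1\}^3$. Given the four vertices $a_{000}, a_{100}, a_{010}, a_{001}$ (read off from the coordinate-axis data), three applications of Corollary~\ref{cor:completion} (after a cyclic relabelling of each face so that the three known vertices become consecutive) produce unique values $a_{110}, a_{101}, a_{011}$ on the three faces adjacent to $a_{000}$. The three faces adjacent to the opposite corner then each carry three known vertices and, again by Corollary~\ref{cor:completion}, each yields a candidate for $a_{111}$. The theorem reduces to showing that these three candidates coincide.

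This hexahedron closing condition is the main obstacle. My first attempt would be in the dual quaternion model: all twelve relative rotations of the cube have axes transversal to both $M$ and $N$, and the three candidates for $a_{111}$ are dual-quaternion products of such restricted rotations applied to $a_{000}$; their equality is an algebraic identity about products of rotations whose axes are common transversals of $M$ and $N$. A more synthetic alternative is to exploit the two principal contact element net trajectories: their underlying circular and conical nets are known to be $3$-dimensionally consistent, so the positional and planar components of the hexahedron close automatically, and only the single rotational degree of freedom about the common contact-element normal at each vertex, discussed in Section~\ref{principal_contact_element_nets_and_rotating_motions}, has to be matched. Either route is expected to localise the difficulty in a handful of equations on products of transversal rotations. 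Once the $3$-cube closes, the extension to arbitrary $n$ is the routine dimension induction, and combined with the uniqueness observation this completes the proof.
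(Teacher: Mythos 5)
Your overall architecture is the same as the paper's: uniqueness via Corollary~\ref{cor:completion}, reduction of existence to the closing condition on an elementary $3$-cube, and then the routine induction to higher dimensions (the paper does this by observing that any two of the $(n-1)$-cubes determining $a_{\vec{1}}$ share a face of dimension $n-2\ge 2$). However, the $3$-cube closing condition is not a reduction you get to defer --- it \emph{is} the theorem, and your proposal stops exactly where the real work begins, offering only two unexecuted sketches. The first route (a direct dual-quaternion identity for products of rotations transversal to $M$ and $N$) is not what the paper does and there is no indication it would terminate. The second route is in the right spirit but, as stated, contains a misconception: with two \emph{independent} trajectories there is no ``common contact-element normal'' at a vertex --- the two contact elements have distinct normals (the images of $M$ and of $N$), so the residual freedom is not a single rotation angle to be matched but two separate one-parameter families of candidate positions, one per trajectory, and the issue is whether these families have a common element.

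The paper's actual argument is worth recording because it supplies precisely the missing ingredient. Imposing only the $M$-trajectory conditions leaves six linear equations whose solution set is a line $K\subset P^7$; since principal contact element nets are themselves $n$D-consistent \cite{bobenko07:_organizing_principles}, the two intersection points of $K$ with $\SQ$ induce the same contact element, so their relative displacement is a rotation and hence $K\subset\SQ$ entirely. The same construction with $N$ gives a second line $L\subset\SQ$. If $K$ and $L$ were skew, their span would meet $\SQ$ in a ruled quadric, so every position on $K$ could be rotated into one on $L$; geometrically this means one fixed contact element can be rotated onto another in infinitely many ways, which forces a common tangent sphere and contradicts the skewness of $M$ and $N$. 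Hence $K\cap L$ is the unique position $a_{111}$. Your proposal correctly leverages the known consistency of principal contact element nets (that is how the paper shows $K,L\subset\SQ$), but without the intersection argument for the two lines the proof is incomplete.
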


For $n = 2$ this result follows directly from
Corollary~\ref{cor:completion}. Starting with $a_{00}$, $a_{10}$, and
$a_{01}$, the position $a_{11}$ is uniquely defined. From $a_{10}$,
$a_{20}$, and $a_{11}$ we can construct $a_{21}$, and so on. Two
independent principal contact element net trajectories of a discrete
rotating motion of dimension $n = 2$ are depicted in
Figure~\ref{fig:rotnet-04}. The figures formed by corresponding points
and face normals are congruent. Neighbouring figures correspond in a
pure rotation (Figure~\ref{fig:rotnet-04b}).

In case of $n \ge 3$ it is not immediately clear that this inductive
construction works. We describe the situation only for $n = 3$; the
problems in higher dimensions are similar. Consider the elementary
cube $a_{i\!jk}$ with $i$, $j$, $k \in \{0, 1\}$
(Figure~\ref{fig:3d-consistency}). The input data consists of
$a_{000}$, $a_{100}$, $a_{010}$, and $a_{001}$. According to
Corollary~\ref{cor:completion}, it defines $a_{110}$, $a_{101}$, and
$a_{011}$. Now there are three possibilities to construct the missing
vertex $a_{111}$, from the three positions $a_{100}$, $a_{110}$,
$a_{101}$, from the three positions $a_{010}$, $a_{110}$, $a_{011}$,
or from the three positions $a_{001}$, $a_{101}$, $a_{011}$. We have
to show that all thus constructed positions are actually identical. In
the terminology of \cite{bobenko08:_discrete_differential_geometry}
this is called \emph{3D-consistency} of rotation nets with two
independent principal curvature trajectories. In
Theorem~\ref{th:consistency}, below, we will show that these nets are
actually \emph{$n$D-consistent.} This is a fundamental property in the
discretization of differential geometric concepts and immediately
implies Theorem~\ref{th:extend}.

\begin{theorem}
  \label{th:consistency}
  Generic discrete rotation nets with two independent curvature line
  trajectories are $n$D-consistent.
\end{theorem}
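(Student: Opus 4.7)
The plan is to reduce to the 3D case and then verify 3D-consistency of a single elementary cube; higher-dimensional consistency follows by the usual inductive propagation argument of \cite{bobenko08:_discrete_differential_geometry}, in which an $n$D-cube is filled in by repeatedly applying 3D-consistency to its 3-faces and the final value is shown independent of the order of application.

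For the cube, fix input positions $a_{000}$, $a_{100}$, $a_{010}$, $a_{001}$. Corollary~\ref{cor:completion} produces $a_{110}$, $a_{101}$, $a_{011}$, and yields three candidate top vertices
\begin{equation*}
  a_{111}^{(1)},\quad a_{111}^{(2)},\quad a_{111}^{(3)},
\end{equation*}
obtained from completion applied to the triples $(a_{100},a_{110},a_{101})$, $(a_{010},a_{110},a_{011})$, and $(a_{001},a_{101},a_{011})$. Each $a_{111}^{(k)}$ is by construction the unique position whose relative displacements to its two cube-neighbours in the corresponding face are rotations with axes intersecting $M$ and~$N$. To prove the theorem, I would show that a single position $a$ exists which meets the defining condition along all six edges emanating from $a_{111}$; the uniqueness clause of Corollary~\ref{cor:completion}, applied to each of the three top faces separately, then forces $a_{111}^{(1)} = a_{111}^{(2)} = a_{111}^{(3)} = a$.

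To construct such an $a$ I would work in the dual quaternion formalism of Section~\ref{sec:kinematics-dual-quaternions}. For each of the three completions, the conditions \eqref{eq:11} (written in terms of the two anchor positions and the lines $M$, $N$) define six linear equations on the components of $a$, so that $a_{111}^{(k)}$ is cut out of the Study quadric by a line $\ell_k$, with one of the two intersection points being the already known diagonal neighbour. Consistency is thus equivalent to the three lines $\ell_1$, $\ell_2$, $\ell_3$ sharing their second intersection point with~$\SQ$. Pooling the linear equations of all three completions yields a larger system whose conditions\,---\,still only involving the fixed lines $M$ and~$N$\,---\,turn out to be compatible as a consequence of the six face closure relations already imposed on the lower cube vertices.

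The main obstacle is exactly this algebraic compatibility, i.e.\ to show that the twelve cube-edge conditions together with the Study equations \eqref{eq:6} do not overdetermine~$a_{111}$. I expect this either to follow by a direct dual quaternion computation using the formulas from the proof of Theorem~\ref{th:completion}, or, more conceptually, to reduce to the Bianchi-type permutability of the discrete Bäcklund transformation relating the two principal contact element trajectory surfaces, whose existence was announced in Section~\ref{sec:introduction}. In either route the genericity hypothesis is used to exclude coincidences where the relevant line $\ell_k$ either lies in $\SQ$ or meets the exceptional $3$-space~$E$.
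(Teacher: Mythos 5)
Your reduction to 3D-consistency and your treatment of dimensions $n\ge 4$ (any two of the completing cubes share a face that already determines the last vertex via Corollary~\ref{cor:completion}) are in line with the paper. The problem is that for the 3D step you stop exactly where the actual work begins: you correctly identify that consistency amounts to the compatibility of the pooled linear system of type \eqref{eq:11} coming from the three top faces together with the Study condition, but you do not prove this compatibility. Of the two routes you offer, the first (``a direct dual quaternion computation'') is not carried out, and the second is unavailable: the Bianchi-type permutability of the Bäcklund transform between the two trajectory surfaces is only announced in the paper as forthcoming work and would in any case rest on precisely the multidimensional consistency you are trying to establish, so invoking it here is circular.

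The paper closes this gap with an idea that is absent from your proposal. It splits the linear conditions on the unknown top vertex according to which of the two transversals they involve. Keeping only the conditions attached to $M$ leaves six linear equations cutting out a line $K\subset P^7$; because principal contact element nets are already known to be $n$D-consistent \cite{bobenko07:_organizing_principles}, the two intersection points of $K$ with $\SQ$ carry the same image of the contact element $(p,\mu)$, hence differ by a pure rotation, which forces the entire line $K$ to lie on $\SQ$. The same construction with $N$ gives a second line $L\subset\SQ$. A geometric argument then shows $K$ and $L$ cannot be skew: otherwise the intersection of $\SQ$ with the three-space they span would be a ruled quadric, every position on $K$ could be rotated into one on $L$, and the contact element $(p_K,\nu_K)$ could be rotated into $(p_L,\nu_L)$ in infinitely many ways\,---\,possible only if the two contact elements share a tangent sphere, contradicting the skewness of $M$ and $N$. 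The point $K\cap L$ is the desired $a_{111}$. Without this separation into the two lines $K$ and $L$ (or an equivalent device), your argument does not establish the theorem.
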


\begin{figure*}
  \begin{minipage}[b]{0.50\linewidth}
    \rule{\linewidth}{0pt}
    \centering
    \includegraphics{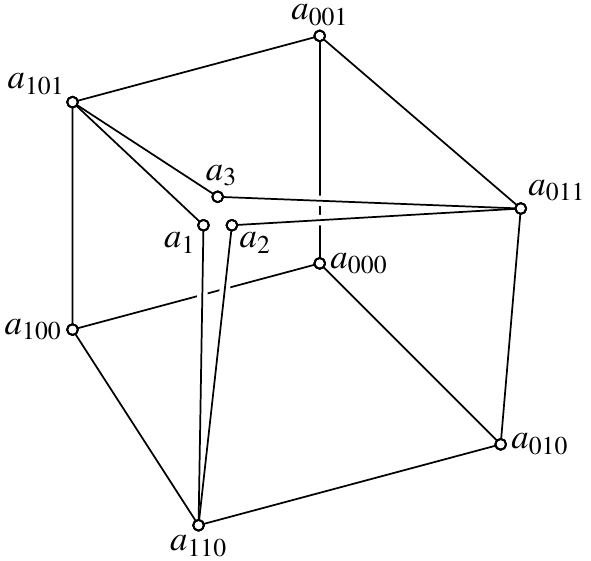}
    \caption{3D-consistency}
    \label{fig:3d-consistency}
  \end{minipage}%
  \begin{minipage}[b]{0.50\linewidth}
    \rule{\linewidth}{0pt}
    \centering
    \includegraphics{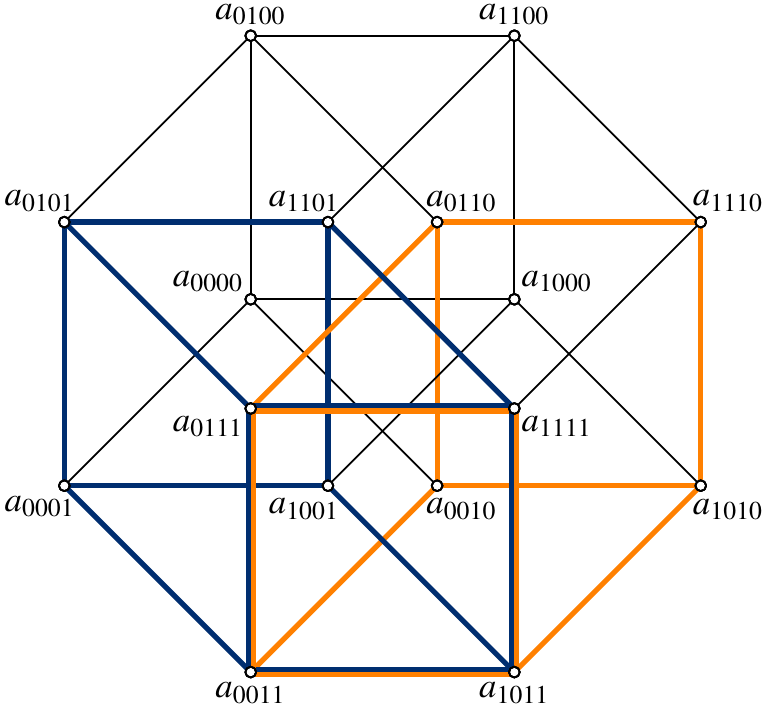}
    \caption{4D-consistency}
    \label{fig:4d-consistency}
  \end{minipage}%
\end{figure*}

\begin{proof}
  We have to show that a generic $n$-dimensional cube can be
  constructed from one vertex $a_{(0,\ldots,0)}$ and $n$-adjacent
  vertices $\tau_i a_{(0,\ldots,0)}$, $i \in \{1, \ldots, n\}$.  Using
  the notation of the last paragraph before the Theorem we proof
  3D-consistency at first.  We already know that there exist points
  $a_1$, $a_2$ and $a_3$ such that
  \begin{equation}
    \label{eq:13}
    \begin{aligned}
      &\quadrilateral{a_{100}}{a_{110}}{a_{101}}{a_1},\\
      &\quadrilateral{a_{010}}{a_{110}}{a_{011}}{a_2},\\
      &\quadrilateral{a_{001}}{a_{101}}{a_{011}}{a_3}
    \end{aligned}
  \end{equation}
  are rotation quadrilaterals with all required properties. We have to
  show that these points coincide whereupon we may set $a_{111} \defeq
  a_1 = a_2 = a_3$.

  If we require only one principal contact element net trajectory (for
  example with normal $M$ and plane $\mu \perp M$), only six linear
  equations remain. They define a straight line $K \subset P^7$ which,
  in the sense of algebraic geometry, has at least two intersection
  points $k_1$, $k_2$ with the Study quadric. Since principal contact
  element nets are known to be $n$D-consistent
  \cite{bobenko07:_organizing_principles}, the $k_1$- and $k_2$-images
  of the contact element $(p,\mu)$ coincide. We denote this
  contact element by $(p_K, \mu_K)$ and conclude that the relative
  displacement between $k_1$ and $k_2$ is a pure rotation. This
  implies that the line $K$ is actually contained in the Study
  quadric~$\SQ$.

  Denote by $L \subset \SQ$ the straight line of positions obtained in
  the same way but with $M$ replaced by $N$ and by $(p_L, \nu_L)$
  the corresponding contact element. The proof of 3D-consistency will
  be finished if we can show that $K$ and $L$ have a point in
  common. Assume conversely that $K$ and $L$ are skew. Then they span
  a three-space whose intersection with $\SQ$ is a ruled quadric. We
  conclude that every position $k \in K$ can be rotated to a unique
  position $l \in L$. In other words, the contact element $(p_K,
  \nu_K)$ can be rotated in infinitely many ways into the contact
  element $(p_L, \nu_L)$. By elementary geometric reasoning this
  is only possible if both contact elements have a common tangent
  sphere. This contradicts the skewness of $M$ and $N$. Hence, $k$ and
  $l$ intersect in a unique position $a_{111}$ which implies the
  Theorem's statement for $n = 3$.

  Now we consider 4D-consistency (see Figure~\ref{fig:4d-consistency}
  which displays the projection of a 4D-cube). According to
  Corollary~\ref{cor:completion}, the input data
  \begin{equation}
    \label{eq:14}
    a_{0000},\ a_{1000},\ a_{0100},\ a_{0010},\ a_{0001}
  \end{equation}
  defines the positions
  \begin{equation}
    \label{eq:15}
    a_{1100},\ a_{1010},\ a_{1001},\ a_{0110},\ a_{0101},\ a_{0011}.
  \end{equation}
  By the previous discussion, we can construct the position
  \begin{equation}
    \label{eq:16}
    a_{1110},\ a_{1101},\ a_{1011},\ a_{0111}
  \end{equation}
  without a contradiction. Now we have four ways to construct the
  missing position $a_{1111}$ by completing a 3D-cube.  In
  Figure~\ref{fig:4d-consistency} two of these cubes are
  highlighted. They share the common quadrilateral
  $\quadrilateral{a_{0111}}{a_{0011}}{a_{1011}}{a_{1111}}$. Thus, by
  Corollary~\ref{cor:completion}, completing the 3D-cubes leads to the
  same position for $a_{1111}$. This situation does not change if we
  consider other pairs of 3D-cubes.

  The same argument yield $n$D-consistency: Inductively it can be
  shown that the input data defines all positions uniquely with
  exception of one position $a_{\vec{1}}$. This position can be
  constructed by completing $n$ cubes of dimension $n-1$. But any two
  of these cubes share a face of dimension $n-2 \ge 2$ which already
  uniquely determines~$a_{\vec{1}}$.
\end{proof}

The construction of $n$-dimensional rotation nets as in
Theorem~\ref{th:extend} from the given input data is inductive. At
every step it requires solving the equation system \eqref{eq:11}
augmented with the Study condition \eqref{eq:2}. The solution is
generically unique and can be computed linearly. When prescribing the
positions on the coordinate axes only a subset of \eqref{eq:11} needs
to be fulfilled.

\section{Conclusion and future research}
\label{sec:conclusion-future-research}

In an attempt to relate recent progress in the field of discrete
differential geometry to kinematics this article introduces discrete
rotating motions and studies possibilities to obtain curvature line
discretizations as trajectories.

We are already in a position to present some of the implications of
our study. Assume that $(x_{\vec{i}}, \nu_{\vec{i}})$ and
$(y_{\vec{i}}, \mu_{\vec{i}})$ are independent principal contact
elements obtained as trajectory surfaces of a discrete rotating
motion. The figures formed by corresponding contact elements are
congruent (Figure~\ref{fig:rotnet-04b}) and clearly we can find two
families of parallel trajectory surfaces with the same properties.
Distinguished examples are obtained by choosing the contact points $x$
and $y$ on the common perpendicular of the contact element normals $M$
and $N$ in the moving space. In this case $x_{\vec{i}} \in
\mu_{\vec{i}}$ and $y_{\vec{i}} \in \nu_{\vec{i}}$ and the trajectory
surfaces satisfy all geometric properties of the classic Bäcklund
transform for pseudospherical surfaces (surfaces of constant Gaussian
curvature):
\begin{itemize}
\item Corresponding points are at constant distance (independent of
  $\vec{i} \in \ZSet^n$),
\item corresponding tangent planes intersect in a constant angle
  (independent of $\vec{i} \in \ZSet^n$), and
\item the connecting line of corresponding points lies in both tangent
  planes.
\end{itemize}

This observation leads to the conjecture that the trajectory surfaces
are discrete surfaces of constant Gaussian curvature in the sense of
\cite{bobenko10:_curvature_theory}. Indeed, we have a proof for this
which shall be published elsewhere. By a result of
\cite{bobenko10:_curvature_theory}, this implies that the parallel
trajectory surfaces are linear Weingarten surfaces. An analytic
description of a Bäcklund transform but for a different discrete
curvature line parametrization (``discrete $O$-surfaces'') can be
found in \cite{schief03:_unification}. Our main results, the
existence of discrete rotating motions with two independent principal
contact element net trajectories and their multidimensional
consistency, are meant to provide the basis for the description of
Bäcklund transforms of pseudospherical principal contact element
nets\,---\,in a forthcoming publication.








\end{multicols}
\end{document}